\documentclass[a4wide,11pt]{article}
\usepackage[utf8]{inputenc}
\usepackage{amsmath,amsfonts,amssymb}
\usepackage{amsthm}
\usepackage{array,colortbl,xcolor}
\usepackage{hyperref}

\usepackage{orcidlink}

\newtheorem{theorem}{Theorem}[section]

\newtheorem{lemma}[theorem]{Lemma}
\newtheorem{remark}{Remark}

\newcommand{\norm}[2][]{\left\lVert#2\right\rVert_{#1}}

\usepackage{color}

\title{The turnpike property for mean-field optimal control problems}

\date{ }

\author{Martin Gugat\thanks{Chair in Dynamics, Control and  Numerics (Alexander von Humboldt-Professorship),
		Department of Data Science, Friedrich-Alexander Universit\"at Erlangen-N\"urnberg (FAU),
		Cauerstr. 11, 91058 Erlangen, Germany, (\href{mailto:martin.gugat@fau.de}{martin.gugat@fau.de} \orcidlink{0000-0002-5281-110X})}, 
	\ Michael Herty\thanks{Chair in Numerical Analysis, IGPM, RWTH Aachen University, Templergraben, 55, D-52062 Aachen, Germany (\href{mailto:herty@igpm.rwth-aachen.de}{herty@igpm.rwth-aachen.de} \orcidlink{0000-0002-6262-2927})}, 
	\ and Chiara Segala\thanks{IGPM, RWTH Aachen University, Templergraben, 55, D-52062 Aachen, Germany (\href{mailto:segala@igpm.rwth-aachen.de}{segala@igpm.rwth-aachen.de} \orcidlink{0000-0002-6480-3772})}
}

\begin{document}
	\maketitle
	\begin{abstract}
  We study the turnpike phenomenon for  optimal control problems
with mean field dynamics that are obtained as the limit $N\rightarrow \infty$ of systems
governed by a large number $N$ of ordinary differential equations.
We show that the optimal control problems with with large time horizons
give rise to a turnpike structure of the optimal state and the optimal control.
For the proof, we use the fact that
the turnpike structure for the problems on the level of ordinary differential equations
is preserved under the corresponding mean-field limit.

	\end{abstract}
	
	{\bf Keywords.} 
	\par Turnpike property, mean-field limit, optimal control 
	
	{\bf AMS Classification.}  
	\par 93C20, 35L04, 35Q89, 49N10

\section{Introduction}\label{sec:intro}
		Over the last few years, there has been an increased level of activity on the study of collective behavior phenomena from a multiscale modeling perspective. Classical examples in socio-economy, biology, and robotics are given by the interactions between self-propelled particles, such as animals and robots, see e.g. \cite{bellomo20review, MR2974186, MR2165531, MR3119732, MR2580958,Giselle}. Those particles interact according to a nonlinear model encoding various social rules for example attraction, repulsion, and alignment.
		
		It is of great relevance for applications in the study of the impact of control inputs in such complex systems. Results in this direction allow the design of optimized actions such as collision-avoidance protocols for swarm robotics \cite{CKPP19}, pedestrian evacuation in crowd dynamics \cite{MR3542027}, the quantification of interventions in traffic management \cite{MR3948232} or in opinion dynamics \cite{AHP15,Garnier}. From a mathematical point of view, a multiagent control problem is described by minimization of an integral objective functional subject to a constraint that is the complex dynamic depicted by a system of ordinary differential equations (ODE). 
	
	The  formulation of an interacting particle system at a microscopic level  requires the study of large-scale systems of agents (or particles) and it requires a considerable effort both from a theoretical and numerical point of view. We may consider a different level of description, that is the derivation of mesoscopic or mean-field approximations of the original dynamic. Here,  the density of the particles is obtained as the number of particles tends to infinity \cite{MR2438213,1556-1801_2022_2_129,MR2740099,MR2425606,lasry2007mean,degond2014large, herty2019consistent, ha2009simple, cardaliaguet2010notes,JacobTotzeck2022}. Of particular interest is therefore the design of controls in the mean-field control approaches  \cite{MR3264236,MR4028474,FPR14,BORZI20204973}.
	
	In this paper, we focus on the turnpike phenomenon for mean-field optimal control problems. This topic has been studied recently for example in
	\cite{https://doi.org/10.48550/arxiv.2101.09965}, and it concerns relations between the solutions
	of dynamic optimal control problems with objective functionals of tracking type
	and the corresponding static optimal control problems. 
	The turnpike property states that  the distance between
	the dynamic and the static  optimal  solution is small, in particular, for large time intervals. Hence, it allows reducing the control effort by solving only static optimal control problems.

	An early to the turnpike property is  \cite{samuelson}, and in \cite{larsgruene}  an
	overview on discrete-time and continuous-time
	turnpike properties are given.
	The turnpike phenomenon for systems governed by
	ordinary differential equations has been studied also in detail in \cite{trelat2015turnpike,zbMATH07063084,geshkovski_zuazua_2022}.  
	measure and integral turnpike properties have been studied in
	\cite{zbMATH06903641}.
	A turnpike analysis for systems that are governed by semilinear partial
	differential equations is presented  in \cite{zbMATH07369283}, while the relation between the turnpike property and the receding-horizon method
	is investigated in \cite{MR4082481}. In \cite{manifoldTP} manifold turnpikes are also studied. Contrary to those works we are interested in the question if the turnpike property of a system persists in the limit of infinitely many ODEs and under which conditions such a turnpike property holds true on the mean-field level.
	
Next, we state the optimal control problem in detail. We consider the control of high-dimensional nonlinear dynamics accounting for the evolution of $N$ agents at the microscopic level and, as described for example in
	\cite{1556-1801_2022_2_129}, the mean-field dynamics given by
	a non-local transport equation for the
	density of particles at position $x \in \mathbb{R}^d$ and time $t \in \mathbb{R}^+$.
	The initial particle density $\mu^0(x)$ 
	 is given and the control action is
	modeled by an additive term in
	the partial differential equation (PDE). More specifically, we consider a
	PDE of the type
	\begin{equation}\label{eq:meso_dyn}
	\partial_t \mu(t,\, x) +
\partial_x \Bigl( \left( (P\ast \mu)(t,x) + u(t,x)\right) \, \mu(t,\, x) \Bigr) = 0, \qquad \mu(a, x) = \mu^0(x),
	\end{equation}
	where
	$\ast$ denotes the convolution operator, the function $P$ is given, and the real positive number $a$ is the initial time. 
	Dynamics of this type may also occur as nonlocal regularizations
	of balance laws, see e.g., \cite{COCLITE2021112370, doi:10.1137/20M1366654}.
	
	We consider an optimal control problem for a finite large time horizon, subjected to system \eqref{eq:meso_dyn}.
	The objective function that  we want to minimize depends both on the control and the state
	\[	
	\textbf J_{( a,\, b)}(\mu, \,u)=
	\int_a^b f( \mu(t,\,x),  \, u(t, x) ) \, dt,
	\]
	for a given real-valued function $f$ 
	\begin{equation}\label{eq:f_meso}
f(\mu, \,u) =
\int_{\mathbb{R}^d} 
\left(
L(x) + \Psi  (u(t,x))
\right) \, d \mu(t,\, x),
	\end{equation}
	and a time interval $[a,b]$ with $a<b$ real positive numbers. A particular example is given by 
	\begin{equation*}
		\partial_t \mu(t,\, x) +
		\partial_x \Bigl( \left( (\mathcal{H}\ast \mu)(t,x)  + u(t,x)\right) \, \mu(t,\, x) \Bigr) = 0,
	\end{equation*}
	where
	$ (\mathcal{H}\ast \mu)(t,x)  = \int_{\mathbb{R}^d}
	H(x-y)( y - x) \, d \mu(t, \, y) $ denotes a non-local integral operator, and $H$ a given continuous function. For this example, we assume that the integral objective function is the sum of
	a quadratic control cost and a tracking	term.
	The tracking term is the mean-field 
	limit of a microscopic term that aims all the particles to reach
	a constant consensus state $\bar{\psi}$.
	Hence, the 	size of  
	the difference 
	\[ \int_{\mathbb{R}^d} \norm{ x - \overline {\psi}}^2 \, d \mu(t, \, x) 	\]
	is minimized in a suitable norm,
	assuming that
	$1 = \int_{\mathbb{R}^d} 1 \, d \mu(t, \, x)$.
For a parameter $\gamma \geq 0$, we
consider $f = \hat{f}$
\[
\hat{f}( \mu,  \, u )=
\int_{\mathbb{R}^d} \left(
\norm{ x  - \overline 
	\psi }^2
+ \gamma \norm{ u(t,x) }^2	\right) \, d \mu(t,\, x) 
\,.
\]
Note that this is a particular case included in 
the cost functional from \cite{Fornasier_2014}. Therein, the minimization of a more general integral cost constrained by a PDE is considered. 
The analysis in \cite{Fornasier_2014} is applicable to our optimal control problem, in particular, the existence result for controls  provide in Theorem 4.7 but the turnpike property is not discussed.

The paper is organized as follows. In Section \ref{sec:ocp_micro} the general dynamic optimal control problem is defined at microscopic level.
Section \ref{sec:sol_meanfield} is devoted to the mean-field approximation of the microscopic dynamics and presents an existence result for the solution of the mesoscopic control problem.
In Section \ref{sec:dissipativity} we show that the
problem satisfies a strict dissipativity inequality both at the microscopic and mean-field level.
In Section \ref{sec:cheap}, we prove that a cheap control condition holds, we first discuss it
for the case with a finite number of particles
and then we extend it to the mean-field case. 
Finally, Section \ref{turnpike} uses the previous assumptions to prove the turnpike property with interior decay.	

\section{An optimal control problem for $N$ particles} \label{sec:ocp_micro}

Let natural numbers
$d$
and 
$N$  be given.
Define the state space 
\[X_N = ({\mathbb R}^d)^N.\]
%
Let  initial particle states 
$\psi^0\in X_N$
be given.
For 
$\psi_k(t) \in {\mathbb R}^d$  ($k\in \{1,...,N\}$)
as in \cite{zbMATH07510074}, 
we consider the system
with the initial conditions
$\psi_k(a) = \psi^0_k$   ($k\in \{1,...,N\}$).
Let a continuous function
\[
P: \, {\mathbb R}^d
\rightarrow  {\mathbb R}, \quad \text{with} \
P(0)=0,
\]
be given
that is bounded
with respect to the maximum norm.
For $k\in \{1,...,N\}$
the movement of the particles is 
governed by the
ordinary differential equations

\begin{equation}\label{eq:micro_dyn}
\begin{split}
\psi_k'(t) &=
(P \ast \mu_N)(\psi_k(t))
+
u_k(t)
\\
&= \frac{1}{N} \sum_{i=1}^{N} 
P(\psi_i(t) - \psi_k(t))
+
u_k(t), \quad \psi_k(a)= \psi^0_k,
\end{split}
\end{equation}
where $u_k(t) = u(t, \psi_k(t))$ and
\[
\mu_N(t,x)= \frac{1}{N} \sum_{i=1}^{N} \delta(x-\psi_i(t))
\]
is the empirical measure supported on the agents states.
We search for a control $u_k(t)$ 
that is a solution of the optimal control
problem where the cost functional
\begin{align}\label{eq:micro_J}
\mathcal J_{(N,a,b)}(\psi, u) = 
\int_a^b
f_N(\psi(t),u(t)) \, dt,
\end{align}
is minimized,  with
\begin{equation}\label{eq:f_N}
	\begin{split}
	f_N(\psi,u)  &=
	\int_{\mathbb{R}^d} \left(
	L(x) + \Psi  (u(t,x))
	\right) \, d \mu_N(t,\, x)
	\\
	&= \frac{1}{N} 
	\sum_{k=1}^N \Bigl(  L(\psi_k(t))
	 + \Psi (u_k(t))
	\Bigr),
		\end{split}
\end{equation}
where the optimization horizon $b-a$ expresses the time 
horizon 
along which we minimize the running cost. Thus our objective is a function of the state and control variables.
For $k\in \{1,...,N\}$, we define the static variables
$\psi^{(\sigma)}, \ u^{(\sigma)}$, for the state and control respectively.
Then with the initial data
$\psi_k(a) = \psi^{(\sigma)}$ 
and the control
$u^{(\sigma)}_k = u^{(\sigma)} = 0$
the system remains in the steady state
$ \psi_k(t)  =  \psi^{(\sigma)} $, for every $k\in \{1,...,N\}$.
The problem is similar to the problem that has
been considered in \cite{Fornasier_2014}. 

For real positive numbers $a, b$, with $b>a$
and  an initial state $\psi^0\in X_N$ 
we define the parametric optimization problem
\[\mathcal Q(N, \, a,\, b , \, \psi^0):\,
\min_u
\mathcal J_{(N,\, a,\, b)}(\psi, \,u)
\]
subject to \eqref{eq:micro_dyn}, and $\mathcal{V}(N, a,\,b,\,\psi^0)$ the optimal value of
$\mathcal Q(N, \, a,\, b, \, \psi^0)$.

\section{Existence of solutions in the mean-field limit} \label{sec:sol_meanfield}

The original formulation of the interacting particle system \eqref{eq:micro_dyn} is at microscopic level through a system of ODEs, but the study of microscopic model for a large system of individuals implies a considerable effort especially in numerical simulations, as models on real data may take into account very large number of interacting individuals. To reduce this complexity we can consider a more general level of description, that is the derivation of a mesoscopic approximation of the original dynamic. The basic idea is to analyse the density of particles, instead of focusing on the evolution of every single particle. Hence we will consider continuous models in order to simulate the collective behaviour in case of analysing systems with a large number of agents $N\gg 1$.
By passing to the mean-field limit $N \rightarrow \infty$ of the ODE system \eqref{eq:micro_dyn}, we obtain the PDE problem \eqref{eq:meso_dyn} which describes how the density of the particles $\mu = \mu(t,x)$ changes in time.

In order to prove the existence of a mean-field limit for the dynamics \eqref{eq:micro_dyn}
and the cost functional \eqref{eq:micro_J}
we consider the functions with the following properties:

\begin{itemize}
	\item[(P)] The function $P:\mathbb{R}^d \rightarrow \mathbb{R^d}$, with $P(0)=0$,
	is a locally Lipschitz function such that
	\[
	\Vert P(\psi) \Vert \leq C_P \Vert \psi \Vert, \quad \text{for all } \psi \in \mathbb{R}^d.
	\]
	
	\item[(L)] The function $L:\mathbb{R}^d \rightarrow [0, + \infty)$
	is a continuous function with respect to the topology generated by the Euclidean distance on $\mathbb{R}^d$. 
	
	\item[($\Psi $)] The function $\Psi :\mathbb{R}^d \rightarrow [0, + \infty)$, with $\Psi(0)=0$,
	is a non negative convex function and there exist $C_{\Psi} \geq 0$ and $1\leq q \leq + \infty$ such that
	\[
	\text{Lip} (\Psi , B(0,R)) \leq C_{\Psi} R^{q-1},
	\]
	for all $R>0$.

\end{itemize}
Considering these three assumptions for the functions $P,L,$ and $\Psi $, we can apply the existence Theorem 4.7 in \cite{Fornasier_2014}. In Theorem \ref{thm:fornasier} we indicate with $\mathcal{W}_1$ the Wasserstein distance between two probability measures $\mu$ and $\nu \in P_1(\mathbb{R}^d)$ as

\[
\mathcal{W}_1(\mu,\nu) := \inf_{\gamma \in \Gamma (\mu,\nu)} \int_{\mathbb{R}^d\times \mathbb{R}^d} \Vert x-y \Vert \ d\gamma(x,y),
\]
where $\Gamma(\mu,\nu)$ denotes the collection of all measures on $\mathbb{R}^d\times \mathbb{R}^d$ with marginals $\mu$ and $\nu$ on the first and second factors respectively.
\begin{theorem}\label{thm:fornasier}
	Let $\mu^0\in P_1(\mathbb{R}^d)$ be a given probability measure with compact support. 
	We assume that the sequence $(\mu_N^0)_{N\in \mathbb{N}}$ of empirical measures $\mu_N^0(x) = \frac{1}{N} \sum_{i=1}^N \delta(x-\psi_i^0)$ is such that $\lim_{N\rightarrow\infty} \mathcal{W}_1(\mu_N^0,\mu^0) = 0$. Let
	\[
	\mu_N(t,x) = \frac{1}{N} \sum_{i=1}^N \delta(x-\psi_i(t)),
	\]
	be supported on the phase space trajectories $\psi_i(t) \in \mathbb{R}^d$, for $i=1,\dots,N$, defining the solution of \eqref{eq:micro_dyn} in $[a,b]$
	with initial state $\psi(a) = \psi^0$. Then there exists a map $\mu \in P_1(\mathbb{R}^d)$ such that
	\begin{itemize}
		\item $\lim_{N\rightarrow\infty} \mathcal{W}_1(\mu_N(t),\mu(t)) = 0$ uniformly with respect to $t\in [a,b]$;
		\item $\mu$ is a weak equi-compactly supported solution of \eqref{eq:meso_dyn};
		\item regarding the cost functional \eqref{eq:micro_J}, the following limit holds:
		
		\begin{equation*}
			\begin{split}
				\lim_{N\rightarrow\infty} \
				&	\int_a^b
				\int_{\mathbb{R}^d} \left(
				L(x) + \Psi  (u(t,x))
				\right) \, d \mu_N(t,\, x) 
				\,dt \\
				=& \int_a^b
				\int_{\mathbb{R}^d} \left(
				L(x) + \Psi  (u(t,x))
				\right) \, d \mu(t,\, x) 
				\,dt .		
			\end{split}
		\end{equation*}

	\end{itemize}
	
\end{theorem}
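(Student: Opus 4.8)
The cleanest route is to recognize that assumptions (P), (L), and ($\Psi$) are precisely the hypotheses of Theorem 4.7 in \cite{Fornasier_2014}, so the plan is to verify that the system \eqref{eq:micro_dyn} together with the cost \eqref{eq:micro_J} fits that framework and then invoke the cited result. Concretely, the interaction kernel $P$ with $P(0)=0$, local Lipschitz continuity and linear growth $\norm{P(\psi)} \le C_P \norm{\psi}$ matches the admissible-velocity assumptions there, while the running cost built from a continuous $L \ge 0$ and a convex $\Psi \ge 0$ satisfying $\mathrm{Lip}(\Psi, B(0,R)) \le C_{\Psi} R^{q-1}$ matches their cost assumptions. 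I sketch below the argument that underlies that theorem, since the same structure will be reused in the later sections.

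First I would establish a uniform-in-$N$ a priori bound on the support of $\mu_N(t)$. Estimating the right-hand side of \eqref{eq:micro_dyn} via $\norm{P(\psi)} \le C_P \norm{\psi}$ together with the boundedness of the admissible control, a Gr\"onwall argument yields $\max_k \norm{\psi_k(t)} \le R_b$ for all $t \in [a,b]$ and all $N$, with $R_b$ depending only on the initial support, on $C_P$, on the control bound, and on $b-a$. Hence every $\mu_N(t)$ is supported in the fixed compact set $K = \overline{B(0,R_b)}$, which is the equi-compact support. On $K$ the velocity field is uniformly bounded, so the curves $t \mapsto \mu_N(t)$ are equi-Lipschitz in $\mathcal{W}_1$, i.e.\ $\mathcal{W}_1(\mu_N(s),\mu_N(t)) \le C\,|t-s|$ with $C$ independent of $N$.

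Next I would invoke compactness. Since $(P_1(K),\mathcal{W}_1)$ is a compact metric space, the equi-Lipschitz family $\{\mu_N\}_N \subset C([a,b];P_1(K))$ is relatively compact by Arzel\`a--Ascoli; extracting a subsequence gives uniform convergence to some limit $\mu \in C([a,b];P_1(K))$, which is the first bullet along a subsequence. Passing to the limit in the weak (distributional) formulation of \eqref{eq:micro_dyn} then identifies $\mu$ as a weak, equi-compactly supported solution of \eqref{eq:meso_dyn}; the only nontrivial point here is the stability of the nonlocal term, namely $P \ast \mu_N \to P \ast \mu$ along trajectories, which follows from the local Lipschitz continuity of $P$ on $K$ and from $\mathcal{W}_1$-convergence. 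The full (rather than merely subsequential) convergence in the first bullet is then recovered from a uniqueness/stability estimate for \eqref{eq:meso_dyn}.

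The convergence of the cost in the third bullet is comparatively routine: for each fixed $t$ the integrand $x \mapsto L(x) + \Psi(u(t,x))$ is continuous and bounded on the compact set $K$, so $\mathcal{W}_1$-convergence $\mu_N(t)\to\mu(t)$ yields convergence of the inner integral, and the uniform-in-$N$ bounds allow passing the limit through the time integral by dominated convergence. The main obstacle I expect is the rigorous limit passage in the nonlocal transport term together with the spatial regularity required of the control field $u(t,\cdot)$: one must ensure that the pointwise evaluations $u_k(t)=u(t,\psi_k(t))$ assemble into a velocity whose limit is consistent with $u$ acting on $\mu$, which is exactly the delicate admissibility requirement handled in \cite{Fornasier_2014}, and the reason it is preferable to rely on that theorem rather than reprove it from scratch.
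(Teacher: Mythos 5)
Your proposal takes essentially the same route as the paper: the paper's entire justification of Theorem \ref{thm:fornasier} is the observation that assumptions (P), (L), and ($\Psi$) are exactly the hypotheses of Theorem 4.7 in \cite{Fornasier_2014}, which is then invoked directly. Your additional sketch of the underlying mechanism (uniform compact support via Gr\"onwall, Arzel\`a--Ascoli compactness in $\mathcal{W}_1$, stability of the nonlocal term, and dominated convergence for the cost) is a correct outline of how that cited result is proved, but it goes beyond what the paper itself does.
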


Theorem \ref{thm:fornasier} holds for general $P,L,\Psi$ functions that satisfy the hypothesis $(P),(L),(\Psi)$. We can observe these assumptions are satisfied for the example we took into account in the introduction \ref{sec:intro}, where
\[
P  (\psi) := H(\psi) \psi, \qquad
L(\psi) := \Vert \psi - \overline {\psi} \Vert^2, \qquad
\Psi  (u) := \gamma \Vert u \Vert^2.
\]

We define the parametric mean-field optimization problem
\[\textbf{Q}( a,\, b , \, \mu^0):\,
\min_{u}
\textbf J_{( a,\, b)}(\mu, \,u)
\]
subject to \eqref{eq:meso_dyn}. We recall
the mean-field objective functional is
\begin{equation}
	\label{jinfty}
	\textbf J_{( a,\, b)}(\mu, \,u)  =
	\int_a^b
	\int_{\mathbb{R}^d} \left(
	L(x) + \Psi  (u(t,x))
	\right) \, d \mu(t,\, x) 
	\,dt 
	.
\end{equation}
We define the optimal value of the mean-field limit problem $ \textbf Q (a,\, b , \, \mu^0) $ as $ {\textbf{V}}(a, \,b,\,\mu^0)$. 
The existence of solutions for $ \textbf Q (a,\, b , \, \mu^0)$ is guaranteed by 
Theorem 5.1 in \cite{Fornasier_2014}.

\section{The strict dissipativity inequality}\label{sec:dissipativity}
In this section we assume that the optimal control
problem 
satisfies a strict dissipativity assumption.
We start considering the $N$-particles problem and
then we proceed with the mean-field limit formulation.

\subsection{The strict dissipativity inequality for the microscopic problem}
For any admissible pair $(\psi(\cdot), u(\cdot))$
and for all $\tau \in [a,\, b]$,
{we assume that 
the following
\emph{dissipativity inequality} holds:}
\begin{equation}
	\begin{split}
\label{dissipationinequalityN}
\int_{a}^\tau &
f_N( \psi(t),\, u(t) ) \, dt\\
&\geq
\int_{a}^\tau
\frac{1}{N}
\left( \|  \psi(t) - \psi^{(\sigma)}\|_N
+
\|  u(t)  - u^{(\sigma)} \|_N
\right)^2
\, dt
	\end{split}
\end{equation}
Here  $
\|z\|_N = 
\sqrt{ \sum_{k=1}^N \| z_k\|^2}$, $\norm{\cdot}$ is the usual Euclidean norm, and $f_N$ is the running cost in \eqref{eq:f_N}.
Given an initial state $\psi^0$, the problem $\mathcal Q(N, \, a,\, b, \, \psi^0)$, i.e. the minimization over $u$ of the cost functional $\mathcal J_{(N,a,b)}$ in \eqref{eq:micro_J}, is a strictly dissipative problem in $[a,b]$ at $(\psi^{(\sigma)},u^{(\sigma)})$.

\subsection{The strict dissipativity inequality in the mean-field limit}

We consider the following computation starting from \eqref{dissipationinequalityN}
\begin{equation}\label{eq:comp_diss_micro}
	\begin{split}
		\int_{a}^\tau &
		f_N( \psi(t),\, u(t) ) \, dt\\
		&\geq
		\int_{a}^\tau
		\frac{1}{N}
		\left( \|  \psi(t) - \psi^{(\sigma)}\|_N
		+
		\|  u(t)  - u^{(\sigma)} \|_N
		\right)^2
		\, dt
		\\
		&=
		\int_{a}^\tau
		\frac{1}{N}
		\left( \sqrt{ \sum_{k=1}^N \|  \psi_k(t) - \psi^{(\sigma)}\|^2}
		+
		\sqrt{ \sum_{k=1}^N \|  u_k(t)  - u^{(\sigma)} \|^2}
		\right)^2
		\, dt\\
		&=
		\int_{a}^\tau
		\frac{1}{N}
		\left( { \sum_{k=1}^N \|  \psi_k(t) - \psi^{(\sigma)}\|^2}
		+
		{ \sum_{k=1}^N \|  u_k(t)  - u^{(\sigma)} \|^2}
		\right)
		\, dt
		\\
		&\quad
		+ \ \int_{a}^\tau
		\frac{2}{N}
	\sqrt{ \sum_{k,l=1}^N \|  \psi_k(t) - \psi^{(\sigma)}\|^2\, \|  u_l(t)  - u^{(\sigma)} \|^2}
		\, dt\\
		&\geq
		\int_{a}^\tau
		\frac{1}{N}
		\left( { \sum_{k=1}^N \|  \psi_k(t) - \psi^{(\sigma)}\|^2}
		+
		{ \sum_{k=1}^N \|  u_k(t)  - u^{(\sigma)} \|^2}
		\right)
		\, dt.
	\end{split}
\end{equation}
Since all the quantities in \eqref{eq:comp_diss_micro} admit a  mean-field limit, we can consider the problem for $N\rightarrow\infty$ thanks to
Theorem \ref{thm:fornasier}
and state a dissipativity nequality in $[a,b]$ in terms of measures.
We have for all $\tau \in [a,\, b]$
\begin{equation}\label{eq:strict_diss_meso}
	\begin{split}
	\int_a^\tau  & f(\mu(t,x), \,u(t,x))  dt\\
&\geq
\int_a^\tau
\int_{\mathbb{R}^d} \left(
\| x  -  
\psi^{(\sigma)} \|^2
+ \| u(t,x)- u^{(\sigma)} \|^2	\right) \, d \mu(t,\, x) 
\,dt ,
	\end{split}
\end{equation}
where $f$ is the functional in 
\eqref{eq:f_meso}, and it is
the mean-field limit of the microscopic running cost \eqref{eq:f_N}.


\section{The cheap control condition}\label{sec:cheap}
For our analysis, a cheap control condition is
essential. 
It requires that the 
optimal values are bounded in terms of
the distance between the initial state 
and the desired static state. We first discuss this assumption 
for the case with a finite number of particles
and then 
extend it to the mean-field case.

\subsection{The cheap control condition for the microscopic problem}

In this section we show that the optimization  problem
$\mathcal Q(N, \, a,\, b , \, \psi^0)$ satisfies a cheap control
condition in the following sense:

{\em
	There exist a constant $\mathcal C_0>0$
	such that
	for all initial times
	$a$, all  initial states
	$\psi^0$
	and
	for all   terminal times
	$b> a $
	we have
	the inequality}
\begin{equation}
	\label{exactcontrol}
	\mathcal{V} (N,\, a,\,b,\,\psi^0)
	\leq
	\mathcal C_0 \,
	\frac{1}{N}\sum_{k=1}^N \|\psi^0 - \psi^{(\sigma)}\|
	.
\end{equation}
\begin{remark}
	The cheap control condition and the dissipativity inequality in
	\eqref{dissipationinequalityN}
	imply that
	$\mathcal Q(N, \, a,\, b , \, \psi^0)$ 
	has the \emph{integral turnpike property},
	which means that 
	for the corresponding optimal state/control $(\psi, u)$ pair we
	have
	\begin{equation}
		\label{integraltp}
		\int_a^b 
		\left( \|  \psi(t) - \psi^{(\sigma)}\|_N
		+
		\|  u(t)  - u^{(\sigma)} \|_N
		\right)^2
		\, dt 
	\leq 
	{ \mathcal C_0}
	\sum_{k=1}^N \|\psi^0 - \psi^{(\sigma)}\|.
\end{equation}
Since the right-hand side is independent of
$b-a$, the inequality
implies that the distance between the
dynamic and the static optimal state
and control is uniformly bounded with respect to the time horizon.
This implies in particular that this distance must be small on the larger part of the time interval for sufficiently
large time horizons.

\end{remark}
In order to prove \eqref{exactcontrol} we consider a
stabilizing feedback law that leads to exponential 
decay of
$ f_{N} (\psi(t), \, u(t))  $.
Let a feedback parameter $\beta  >0$ be given.
We define the control
\begin{equation} \label{eq_cheap_control}
u(t,\psi_k(t)) = 
\beta \left(  \psi^{(\sigma)} - \psi_k(t) \right) 
- \frac{1}{N} \sum_{l=1}^N P(\psi_l(t)- \psi_k(t)).
\end{equation}
Then for the solution of
the initial value problem 
with the initial states
$\psi^0_k$ at the time $a$ and the differential equations  
\[\psi_k'(t) =
\frac{1}{N} \sum_{l=1}^N P(\psi_l(t)- \psi_k(t)) 
+
u(t,\psi_k(t))\]
we have
\[\psi_k'(t) =
\beta \left( \psi^{(\sigma)}  - \psi_k(t) \right).
\]

\begin{lemma}\label{lem:micro}
Consider the additional local assumption on bounded level sets of $L$:
\begin{equation}\label{eq:L_ass}
L(\psi) 
\leq C_L \, \Vert \psi - \psi^{(\sigma)}\Vert,
\end{equation}

and let
\[
\mathcal L_N(t) = \frac{1}{N} 
\|\psi_k(t) - \psi^{(\sigma)} \|^2.
\]
Then $\mathcal L_N$ decays exponentially fast in time.
Hence we have inequality \eqref{exactcontrol}
with $\mathcal C_0$  as defined in \eqref{c0definition} below. 
\end{lemma}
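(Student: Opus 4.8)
The plan is to use the stabilizing feedback \eqref{eq_cheap_control} as a particular admissible control for the problem $\mathcal{Q}(N, a, b, \psi^0)$. Since $\mathcal{V}(N, a, b, \psi^0)$ is by definition the infimum of the cost over all admissible controls, the cost incurred along the closed-loop trajectory driven by \eqref{eq_cheap_control} is automatically an upper bound for $\mathcal{V}(N, a, b, \psi^0)$; showing that this particular cost satisfies \eqref{exactcontrol} then establishes the cheap control inequality. The decisive simplification, already recorded just before the lemma, is that under this feedback the closed loop decouples into the linear system $\psi_k'(t) = \beta(\psi^{(\sigma)} - \psi_k(t))$, whose solution is the explicit $\psi_k(t) = \psi^{(\sigma)} + e^{-\beta(t-a)}(\psi_k^0 - \psi^{(\sigma)})$.

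First I would settle the exponential-decay claim. From the explicit solution one reads off $\|\psi_k(t) - \psi^{(\sigma)}\| = e^{-\beta(t-a)}\|\psi_k^0 - \psi^{(\sigma)}\|$, so each summand $\|\psi_k(t) - \psi^{(\sigma)}\|^2$, and therefore the averaged Lyapunov function $\mathcal{L}_N$, obeys $\mathcal{L}_N(t) = e^{-2\beta(t-a)}\mathcal{L}_N(a)$, equivalently $\tfrac{d}{dt}\mathcal{L}_N = -2\beta\,\mathcal{L}_N$. This proves the first assertion and is the engine of the cost estimate that follows.

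Next I would bound the two parts of the running cost $f_N$ separately along this trajectory. For the tracking part the extra hypothesis \eqref{eq:L_ass} gives $L(\psi_k(t)) \leq C_L\,\|\psi_k(t) - \psi^{(\sigma)}\| = C_L\, e^{-\beta(t-a)}\|\psi_k^0 - \psi^{(\sigma)}\|$. For the control part I would first bound the feedback \eqref{eq_cheap_control} itself, using $\|\psi_l(t) - \psi_k(t)\| \leq \|\psi_l(t) - \psi^{(\sigma)}\| + \|\psi_k(t) - \psi^{(\sigma)}\|$ together with the sublinear growth $\|P(\psi)\| \leq C_P\|\psi\|$ from (P); this yields $\|u_k(t)\| \leq (\text{const in }\beta, C_P)\, e^{-\beta(t-a)}\,(\text{initial deviations})$. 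Assumption $(\Psi)$, through $\Psi(u) = \Psi(u) - \Psi(0) \leq C_\Psi\|u\|^q$, then converts this into a bound of the form $\mathrm{const}\cdot e^{-q\beta(t-a)}\|\psi_k^0 - \psi^{(\sigma)}\|^q$. Integrating over $[a,b]$ is what makes everything uniform in the horizon: $\int_a^b e^{-\beta(t-a)}\,dt \leq 1/\beta$ and $\int_a^b e^{-q\beta(t-a)}\,dt \leq 1/(q\beta)$, both independent of $b$. Summing over $k$ and dividing by $N$ assembles the right-hand side of \eqref{exactcontrol}, with $\mathcal{C}_0$ collecting $C_L, C_P, C_\Psi, \beta, q$.

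The main obstacle is reconciling the exponents: the $\Psi$-term naturally produces $\|\psi_k^0 - \psi^{(\sigma)}\|^q$, whereas \eqref{exactcontrol} is linear in the initial deviation. I expect this to be resolved by invoking the compact support of the initial data, so that $\|\psi_k^0 - \psi^{(\sigma)}\| \leq R$ uniformly and hence $\|\psi_k^0 - \psi^{(\sigma)}\|^q \leq R^{q-1}\|\psi_k^0 - \psi^{(\sigma)}\|$, the factor $R^{q-1}$ being absorbed into $\mathcal{C}_0$; some care is then needed for the borderline case $q = \infty$ and for matching the precise form of $\mathcal{C}_0$ in \eqref{c0definition}. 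A minor point to dispatch is admissibility of the feedback as a map $u(t,x)$: only its values $u_k(t) = u(t,\psi_k(t))$ along the trajectory enter the cost, so the construction in \eqref{eq_cheap_control} suffices.
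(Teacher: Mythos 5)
Your proposal follows essentially the same route as the paper's proof: use the feedback \eqref{eq_cheap_control} as an admissible control, exploit the decoupled closed-loop dynamics $\psi_k'(t)=\beta(\psi^{(\sigma)}-\psi_k(t))$ to get exponential decay of $\mathcal L_N$ and of $\|\psi_k(t)-\psi^{(\sigma)}\|$, bound $L$ via \eqref{eq:L_ass} and $\|u_k\|$ via property (P), convert to a bound on $\Psi(u_k)$, and integrate in time to obtain \eqref{exactcontrol} with $\mathcal C_0$ as in \eqref{c0definition}. The only divergence is your treatment of the exponent $q$ in property $(\Psi)$: the paper simply asserts $\Psi(u)\leq C_\Psi\|u\|$ (tacitly taking $q=1$), whereas you keep $q$ general and absorb the factor $R^{q-1}$ from the compactly supported initial data into the constant --- a more careful handling of a point the paper glosses over, but not a different proof.
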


\begin{proof}
%
We have
\begin{eqnarray*}
\partial_t \mathcal L_N(t) & = & 
\frac{2}{N} 
\langle \psi_k(t) - \psi^{(\sigma)} ,\;
\psi_k'(t) \rangle_{{\mathbb R}^d}
\\
& = &
\frac{2}{N} 
\langle \psi_k(t) - \psi^{(\sigma)}, \;
\beta \left( \psi^{(\sigma)} - \psi_k(t) \right)
 \rangle_{{\mathbb R}^d}
 \\
 & = &
 - \beta \, \frac{2}{N} 
\| \psi_k(t) - \psi^{(\sigma)} 
 \|^2
 \\
 & = &
  - 2 \, \beta \,\mathcal L_N(t). 
\end{eqnarray*}
Hence we have
$\mathcal L_N(t) = \mathcal L_N(a) \,e^{-2\beta\, t}$.
Moreover, we even have
\begin{equation}\label{eq:mathL_ass}
\|\psi_k(t)  - \psi^{(\sigma)} \|
= \|\psi_k(a) -\psi^{(\sigma)} \| \,e^{-\beta\, t}.
\end{equation}
We  have  the inequality
\[
\begin{split}
\|u_k(t)\|
&\leq
\beta \left\| \psi^{(\sigma)} - \psi_k(t) \right\|
+ \frac{C_P}{N} \sum_{l=1}^N
\left(\|\psi_k(t) - \psi^{(\sigma)} \|
+ \|\psi_l(t)-  \psi^{(\sigma)} \| \right)\\
&=
(\beta + C_P) \left\|  \psi^{(\sigma)}  - \psi_k(t) \right\|
+ \frac{C_P}{N} \sum_{l=1}^N
\, \|\psi_l(t)-  \psi^{(\sigma)} \| ,
\end{split}
\]
where we used the property $(P)$ stated in Section \ref{sec:sol_meanfield}.
Hence we have
\[
\|u_k(t)\|
\leq
 e^{-\beta\, t}\,
 \left(
(\beta + C_P)
\|\psi_k(a) - \psi^{(\sigma)}\| +
 \frac{C_P}{N}
\sum_{l=1}^N \|\psi_l(a)-  \psi^{(\sigma)} \| 
\right).
\]
By property $(\Psi)$ in Section \ref{sec:sol_meanfield} we know that $\Psi(u)\leq C_\Psi \| u \|$,
therefore we can write
\[
\Psi(u_k(t))
\leq
e^{-\beta\, t}\,C_\Psi \,
\left(
(\beta + C_P)
\|\psi_k(a) - \psi^{(\sigma)}\| +
\frac{C_P}{N}
\sum_{l=1}^N \|\psi_l(a)-  \psi^{(\sigma)} \| 
\right).
\]
Adding the term $L(\psi_k(t))$ on both sides, using \eqref{eq:L_ass} and \eqref{eq:mathL_ass}, we have
\[
\begin{split}
L(\psi_k&(t)) + \Psi(u_k(t)) \leq e^{-\beta\, t}\,C_L\,
\|\psi_k(a) - \psi^{(\sigma)}\| \, +\\
&+ e^{-\beta\, t}\,C_\Psi \,
\left(
(\beta + C_P)
\|\psi_k(a) - \psi^{(\sigma)}\| +
\frac{C_P}{N}
\sum_{l=1}^N \|\psi_l(a)-  \psi^{(\sigma)} \| 
\right).
\end{split}
\]
This yields
\[
f_N(\psi(t),\, u(t)) \leq
\Bigl(
C_L + \beta C_\Psi  + 2 C_P C_\Psi \Bigr)e^{-\beta\, t}
\frac{1}{N}\sum_{k=1}^N \|\psi_k(a) - \psi^{(\sigma)}\|.
\]
Hence 
$f_N(\psi(t),\, u(t))$ decays exponentially fast
with the rate $\beta$. For the optimal value this implies
\[
\mathcal V(N,\, a,\,b,\,\psi^0)
\leq
\Bigl(
C_L + \beta C_\Psi  + 2 C_P C_\Psi \Bigr)
\frac{1}{N \beta}\sum_{k=1}^N \|\psi_k(a) - \psi^{(\sigma)}\|.
\]
Hence
\eqref{exactcontrol}
follows
with
\begin{equation} \label{c0definition}
	\mathcal C_0= \frac{1}{ \beta} \Bigl(
	C_L + \beta C_\Psi  + 2 C_P C_\Psi \Bigr).
\end{equation}

\end{proof}

\subsection{The cheap control condition in the mean-field limit}

Also for the cheap control condition, we can compute the limit inequality in terms of measures.
Given $\mathcal C_0>0$, for all initial times
$a\geq 0$, terminal times
$b> a $ 
and  initial states
$\mu(a,x)=\mu^0(x)\in P_1(\mathbb{R}^d)$, we have

\begin{equation}\label{eq:cheap_control_ass}
	\textbf {{V}}(a,\,b,\,\mu^0)
	\leq
	\mathcal{C}_0 \int_{\mathbb{R}^d}
	\|x - \psi^{(\sigma)}\|
	\, d\mu^0( x).
\end{equation}
We recall that $\textbf {V}$ is the optimal value of
the mean-field optimization problem. To prove the mean-field cheap control inequality we follow the same idea of the microscopic case, namely we consider a stabilizing feedback law that leads to exponential 
decay of the mean-field running cost. Combining \eqref{eq:meso_dyn} and \eqref{eq_cheap_control}, and letting $N\rightarrow\infty$
we have
\[
\partial_t \mu(t,x) + \partial_x \Bigl(\beta \left(  \psi^{(\sigma)} - x \right) \mu(t,x) \Bigr) = 0.
\]

\begin{lemma}
	Consider the additional local assumption on bounded level sets of $L$ in Eq.\eqref{eq:L_ass},
	then the cheap control condition holds also for the mean-field limit problem, that is \eqref{eq:cheap_control_ass} with $\mathcal{C}_0$ the same constant \eqref{c0definition} as in the microscopic case.
\end{lemma}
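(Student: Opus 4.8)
The plan is to mirror the microscopic argument from Lemma \ref{lem:micro}, transporting each step through the mean-field limit established in Theorem \ref{thm:fornasier}. The feedback law \eqref{eq_cheap_control}, after passing to the limit $N\rightarrow\infty$, reduces the dynamics to the linear transport equation $\partial_t \mu(t,x) + \partial_x\bigl(\beta(\psi^{(\sigma)}-x)\mu(t,x)\bigr) = 0$, whose characteristics satisfy $x'(t) = \beta(\psi^{(\sigma)}-x(t))$. Along each characteristic we therefore have the pointwise exponential contraction $\|x(t)-\psi^{(\sigma)}\| = \|x(a)-\psi^{(\sigma)}\|\,e^{-\beta t}$, which is the continuum analogue of \eqref{eq:mathL_ass}. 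Since $\mu(t,\cdot)$ is the push-forward of $\mu^0$ under this characteristic flow, the key moment-type quantity contracts as $\int_{\mathbb{R}^d}\|x-\psi^{(\sigma)}\|\,d\mu(t,x) = e^{-\beta t}\int_{\mathbb{R}^d}\|x-\psi^{(\sigma)}\|\,d\mu^0(x)$.

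Next I would bound the running cost $f(\mu(t,x),u(t,x))$ in terms of this decaying quantity. The feedback control at the mean-field level reads $u(t,x) = \beta(\psi^{(\sigma)}-x) - (P\ast\mu)(t,x)$, so using property $(P)$ and $\|(P\ast\mu)(t,x)\|\leq C_P\!\int\!(\|x-\psi^{(\sigma)}\|+\|y-\psi^{(\sigma)}\|)\,d\mu(t,y)$, I would obtain a pointwise estimate on $\|u(t,x)\|$ in terms of $\|x-\psi^{(\sigma)}\|$ and the first moment of $\mu(t,\cdot)$. Combining property $(\Psi)$, which gives $\Psi(u)\leq C_\Psi\|u\|$, with the assumption \eqref{eq:L_ass} on $L$, and integrating against $d\mu(t,x)$, I expect to recover exactly the same prefactor as in the microscopic case, namely
\[
f(\mu(t),u(t)) \leq \bigl(C_L + \beta C_\Psi + 2 C_P C_\Psi\bigr)\,e^{-\beta t}\int_{\mathbb{R}^d}\|x-\psi^{(\sigma)}\|\,d\mu^0(x).
\]
Integrating in $t$ over $[a,b]$ and bounding the resulting geometric-type integral by $1/\beta$ then yields \eqref{eq:cheap_control_ass} with the constant $\mathcal{C}_0$ from \eqref{c0definition}, giving an admissible control whose cost dominates the optimal value $\textbf{V}(a,b,\mu^0)$.

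Alternatively, and more economically, one could avoid redoing the estimate from scratch by invoking the convergence results already collected. Each finite-$N$ feedback solution satisfies the microscopic bound \eqref{exactcontrol} with constant $\mathcal{C}_0$ uniform in $N$, $a$, and $b$; by Theorem \ref{thm:fornasier} the cost functionals converge, $\mathcal{J}_{(N,a,b)}\to\textbf{J}_{(a,b)}$, and the right-hand sides converge since $\frac1N\sum_k\|\psi_k^0-\psi^{(\sigma)}\| = \int_{\mathbb{R}^d}\|x-\psi^{(\sigma)}\|\,d\mu_N^0(x) \to \int_{\mathbb{R}^d}\|x-\psi^{(\sigma)}\|\,d\mu^0(x)$ under $\mathcal{W}_1$-convergence. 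Passing to the limit in the inequality then transfers the cheap control bound directly to the mean-field problem. The main obstacle in either route is justifying the interchange of limit and optimization: one must check that the limiting feedback control is admissible for the mean-field problem and that convergence of the approximating costs is strong enough to preserve the inequality for the \emph{optimal} value $\textbf{V}$, rather than merely for the particular feedback trajectory. This requires care that the $\mathcal{W}_1$-convergence of $\mu_N^0$ to $\mu^0$ together with the uniform compact-support and moment control furnished by Theorem \ref{thm:fornasier} is enough to close the argument uniformly in the time horizon $b-a$.
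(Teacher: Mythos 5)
Your first route is exactly the paper's own proof: insert the mean-field version of the feedback law \eqref{eq_cheap_control}, use property $(P)$ to bound $\|u(t,x)\|$ by $\|x-\psi^{(\sigma)}\|$ plus the first moment of $\mu(t,\cdot)$, apply $(\Psi)$ and \eqref{eq:L_ass}, integrate against $d\mu(t,x)$ to get $f(\mu(t),u(t)) \leq (C_L+\beta C_\Psi + 2C_P C_\Psi)e^{-\beta t}\int\|x-\psi^{(\sigma)}\|\,d\mu(a,x)$, and integrate in time to obtain \eqref{eq:cheap_control_ass} with the constant \eqref{c0definition}. Your justification of the exponential moment decay via the characteristics $x'(t)=\beta(\psi^{(\sigma)}-x(t))$ and the push-forward structure is in fact slightly more careful than the paper, which simply reuses the microscopic identity \eqref{eq:mathL_ass}; your alternative second route (passing to the limit in the $N$-particle bound for the fixed feedback control via Theorem \ref{thm:fornasier}) is also sound, precisely because the microscopic bound holds for the feedback trajectory's cost and not merely for the optimal value, which resolves the admissibility concern you raise.
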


\begin{proof}
Considering the mean-field formulation of \eqref{eq_cheap_control}, we obtain
\begin{equation*}
	u(t,x) \mu(t,x)= 
	\beta \left(  \psi^{(\sigma)} - x \right) \mu(t,x)
	- \mathcal{P}[\mu](t,x) \mu(t,x).
\end{equation*}
Thanks to property $(P)$ in Section \ref{sec:sol_meanfield}, this yields to
\begin{align*}
\| u(t,x)\| \mu(t,x)
& \leq
\beta \, \|  \psi^{(\sigma)} - x \| \, \mu(t,x)
+ C_P \, \mu(t,x) \int_{\mathbb{R}^d}
\left( \| y - \psi^{(\sigma)}\| + \| x- \psi^{(\sigma)}\| \right) \, d \mu(t, \, y)
\\
& \leq
(\beta + C_P) \, \|  \psi^{(\sigma)} - x \| \, \mu(t,x)
+ C_P \, \mu(t,x) \int_{\mathbb{R}^d}
\| y - \psi^{(\sigma)}\|  \, d \mu(t, \, y).
\end{align*}
We know that $\Psi(u) \leq C_\Psi \|u\|$ (property $(\Psi)$, Section \ref{sec:sol_meanfield}), hence we have
\[
	\Psi(u(t,x)) \mu(t,x) 
	\]
\[
	\leq
	\\
	C_\Psi (\beta + C_P) \, \|  \psi^{(\sigma)} - x \| \, \mu(t,x)
	+ C_\Psi C_P \, \mu(t,x) \int_{\mathbb{R}^d}
	\| y - \psi^{(\sigma)}\|  \, d \mu(t, \, y).
\]
As in the proof of Lemma \ref{lem:micro}, we can add $L(\psi_k(t))$ and integrate over in $d\mu(t,x)$ on both sides. Then using \eqref{eq:L_ass} and \eqref{eq:mathL_ass}, we obtain that the function $f$ defined in Eq. \eqref{eq:f_meso} satisfies
\begin{align*}
f(\mu(t,x), u(t,x))
 \leq
\Bigl(
C_L + \beta C_\Psi  + 2 C_P C_\Psi \Bigr)e^{-\beta\, t} \,  \int_{\mathbb{R}^d} \| x- \psi^{(\sigma)} \| \, d \mu(a,x).
\end{align*}
	For the optimal value we obtain 
	\[
	{\textbf{V}}(a,\,b,\,\mu^0)
	\leq
\mathcal{C}_0\,  \int_{\mathbb{R}^d} \| x- \psi^{(\sigma)} \|\, d \mu(a,x).
	\]
\end{proof}


\section{On the turnpike property with interior decay}\label{turnpike}

In this section we investigate whether the optimal control
problems that we discuss satisfy a turnpike property with interior decay as discussed in
\cite{zbMATH07364550}.
We start with the $N$-particle problem and
then proceed with the mean-field limit problem.

\subsection{The turnpike inequality
	for the microscopic problem}
In this section,
we present a turnpike property for the optimal control problem
$\mathcal Q(N, \, a,\, b , \, \psi^0)$
that follows from  
the dissipativity inequality
(\ref{dissipationinequalityN})
and the cheap control condition
(\ref{exactcontrol}).
As the name indicates, this property focuses
on the situation that the set where the
distance between the optimal dynamic and
the optimal static solution is small
for large $b$ is
located in final part  of the time interval $[a, b]$,
that is an interval of the form
$[ b - (1 - \lambda)(b-a), b]$ with 
$\lambda \in (0, 1)$.

We define as $\hat \psi(a,\, b, \, \psi^0)(t)$ and $\hat u(a, \, b ,\, \psi^0)(t)$
the 
{optimal}
state and 
{optimal} control respectively at time $t$ with initial state $\psi(a) = \psi^0 = \hat \psi(a,\, b, \, \psi^0)(a)$ in the interval $[a,b]$.	
Let $\lambda \in (0, \,1)$ be given.
For $b>0$ consider the number 
\begin{equation*}
	\label{zentralungleichung2}
	\mathcal A_\ast(b) := 
	{\frac{1}{N}
		\int\limits_{ {a + \lambda (b-a) }}^{  b}
		\left(\| \hat \psi(a, b,\, \psi^0)(t)  - \psi^{(\sigma)}  \|_N
		+
		\|  \hat u(a, b ,\, \psi^0)(t) -  u^{(\sigma)}\|_N
		\right)^2 \, dt.
	}
\end{equation*}
The following theorem states that the optimal control problem with $N$ agents has 
a turnpike property with interior decay:
\begin{theorem}
	\label{odethm}
	The optimization problem
	$\mathcal Q(N, \, a,\, b , \, \psi^0)$
	has a turnpike property with interior decay in the sense that
	\[
	\mathcal A_\ast(b)
	\leq   \frac{\mathcal C_0^2}{
		{\lambda\, (b-a)  }}
	\frac{1}{N}
		\sum_{k=1}^N \|\psi^0 - \psi^{(\sigma)}\|.
	\]
	where $\mathcal C_0$ is as in \eqref{c0definition}.
\end{theorem}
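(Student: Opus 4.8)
The plan is to combine the two ingredients established earlier, namely the dissipativity inequality \eqref{dissipationinequalityN} and the cheap control condition \eqref{exactcontrol}, to bound the tail integral $\mathcal A_\ast(b)$. The key observation is that $\mathcal A_\ast(b)$ is an integral over the subinterval $[a+\lambda(b-a),\,b]$ of the same nonnegative integrand that appears on the right-hand side of the dissipativity inequality. First I would apply the dissipativity inequality \eqref{dissipationinequalityN} with the choice $\tau = b$ to the optimal pair $(\hat\psi,\hat u)$, which gives
\[
\mathcal V(N,\,a,\,b,\,\psi^0) = \int_a^b f_N(\hat\psi(t),\hat u(t))\,dt \geq \int_a^b \frac{1}{N}\left(\|\hat\psi(t)-\psi^{(\sigma)}\|_N + \|\hat u(t)-u^{(\sigma)}\|_N\right)^2 dt.
\]
Since the integrand is nonnegative, the full integral over $[a,b]$ dominates the integral over the tail $[a+\lambda(b-a),\,b]$, so the right-hand side is at least $N\,\mathcal A_\ast(b)$ (accounting for the extra $1/N$ factor already present in the definition of $\mathcal A_\ast$).

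Next I would invoke the cheap control bound \eqref{exactcontrol}, which controls $\mathcal V(N,\,a,\,b,\,\psi^0)$ from above by $\mathcal C_0 \frac{1}{N}\sum_{k=1}^N\|\psi^0-\psi^{(\sigma)}\|$. Chaining the two inequalities yields a bound on $\mathcal A_\ast(b)$ that does not yet contain the crucial factor $1/(\lambda(b-a))$ nor the squared constant $\mathcal C_0^2$; this tells me that a direct application of dissipativity alone is too crude. The decisive step, and the place where the factor $\lambda(b-a)$ must enter, is to exploit that the optimal value restricted to the tail subinterval is itself controlled, by using the cheap control condition on a shifted or truncated problem. Concretely, I would bound the running cost on a well-chosen intermediate subinterval and use a mean-value/averaging argument: there must exist a time $s$ in the first portion $[a,\,a+\lambda(b-a)]$ at which the instantaneous cost is small, precisely because the integral of the cost over that portion is bounded by $\mathcal V \le \mathcal C_0\frac{1}{N}\sum_k\|\psi^0-\psi^{(\sigma)}\|$ while the portion has length $\lambda(b-a)$; dividing produces the $1/(\lambda(b-a))$ factor.

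From such a time $s$ one restarts the cheap control construction, using the state $\hat\psi(s)$ as a new initial datum for the remaining interval $[s,b]$. By optimality (dynamic programming / Bellman principle), the optimal tail cost over $[s,b]$ is no larger than the cheap control bound applied with initial state $\hat\psi(s)$, namely $\mathcal C_0 \frac{1}{N}\sum_k\|\hat\psi_k(s)-\psi^{(\sigma)}\|$. Feeding this back through the dissipativity inequality restricted to $[s,b] \supseteq [a+\lambda(b-a),b]$ then yields the tail bound, and the product of the two applications of $\mathcal C_0$ (once to select $s$, once to control the restarted problem) accounts for the $\mathcal C_0^2$ appearing in the statement. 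The main obstacle I anticipate is making the averaging step fully rigorous: I must guarantee that the chosen time $s$ lies in the first part of the interval and simultaneously controls $\|\hat\psi(s)-\psi^{(\sigma)}\|_N$ well enough — this requires relating the instantaneous tracking distance to the instantaneous running cost $f_N$, which in turn relies on the lower bound built into the dissipativity inequality, and care is needed because the integrand mixes the state and control contributions inside a single squared sum.
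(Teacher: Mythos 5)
Your proposal is correct and follows essentially the same route as the paper's proof: the integral turnpike bound \eqref{integraltp}, a mean-value argument producing a time $t_0\in[a,\,a+\lambda(b-a)]$ at which the dissipativity integrand is at most its average over that subinterval (inequality \eqref{aprioriungleichung}), the cheap control condition \eqref{exactcontrol} applied to the restarted problem starting at $t_0$ from the state $\hat\psi(a,b,\psi^0)(t_0)$, and finally the dissipativity inequality \eqref{dissipationinequalityN} to dominate the tail integral $\mathcal A_\ast(b)$ by $\mathcal V(N,\,t_0,\,b,\,\hat\psi^0)$, with the factors $\mathcal C_0^2$ and $1/(\lambda(b-a))$ arising exactly as you describe. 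The one obstacle you flag at the end --- passing from the averaged bound on the squared quantity $\tfrac{1}{N}\bigl(\|\hat\psi(t_0)-\psi^{(\sigma)}\|_N+\|\hat u(t_0)-u^{(\sigma)}\|_N\bigr)^2$ to a bound on the first-power sum $\tfrac{1}{N}\sum_{k=1}^N\|\hat\psi_k(t_0)-\psi^{(\sigma)}\|$ that the cheap control condition actually requires --- is genuine, but the paper's own proof makes exactly this transition without comment (``With \eqref{aprioriungleichung} this implies''), so your argument is faithful to, and no less rigorous than, the published one.
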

\begin{proof}
Due to \eqref{integraltp} we have
\[ \frac{1}{N}
\int\limits_{a}^{ b}
\left(\| \hat \psi(a, b,\, \psi^0)(t)  - \psi^{(\sigma)}  \|_N
+
\|  \hat u(a, b ,\, \psi^0)(t) -  u^{(\sigma)}\|_N
\right)^2 \, dt
\]
\[
\leq \frac{\mathcal C_0}{N}
\sum_{k=1}^N \|\psi^0 - \psi^{(\sigma)}\|.
\]
Hence  there exists $t_0 \in [a,\, {a + \lambda (b-a) }]$ such that
\begin{equation}	\label{aprioriungleichung}
	\begin{split}
	\frac{1}{N} &\left(\| \hat \psi(a, b,\, \psi^0)(t_0)  - \psi^{(\sigma)}  \|_N
	+
	\|  \hat u(a, b ,\, \psi^0)(t_0) -  u^{(\sigma)}\|_N
	\right)^2 
	\\
	&\leq
	\frac{1}{{\lambda (b - a)}}
	\frac{\mathcal C_0}{N}
\sum_{k=1}^N \|\psi^0 - \psi^{(\sigma)}\|.
		\end{split}
\end{equation}
The cheap control assumption implies  that for the optimization problem
$\mathcal Q(N, \, t_0, \,  b, \, \hat \psi^0)$
that starts
at $t_0$ with the initial state
$ \hat \psi^0
=
\hat \psi(a, b,\, \psi^0)(t_0)$
we have
\begin{equation}
	\label{exactcontrolplpl}
	\mathcal V(N,\, t_0,\,b ,\, \hat \psi^0)
	\leq
	\mathcal C_0 \,
	\frac{1}{N} 
	\, 
\sum_{k=1}^N
	\| \hat \psi^0(a, b,\, \psi^0)(t_0) - \psi^{(\sigma)}\|
	.
\end{equation}
With \eqref{aprioriungleichung} this implies
\begin{equation}
	\mathcal V(N,\, t_0,\,b ,\, \hat \psi^0)
	\leq
	\frac{\mathcal C_0^2}{{\lambda (b-a)}}
	\frac{1}{N}
\sum_{k=1}^N \|\psi^0 - \psi^{(\sigma)}\|
	.
\end{equation}
Due to 
\eqref{dissipationinequalityN}
this yields
\begin{align*}
	\mathcal A_\ast(b) &\leq 
	{\frac{1}{N}
		\int\limits_{ t_0 }^{  b}
		\left(\| \hat \psi(a, b,\, \psi^0)(t)  - \psi^{(\sigma)}  \|_N
		+
		\|  \hat u(a, b ,\, \psi^0)(t) -  u^{(\sigma)}\|_N
		\right)^2 \, dt
	}\\
&\leq
\mathcal V(N,\, t_0,\,b ,\, \hat \psi^0)
\\
&\leq   \frac{\mathcal C_0^2}{
	{\lambda (b-a)}}
\frac{1}{N}
\sum_{k=1}^N \|\psi^0 - \psi^{(\sigma)}\|.
\end{align*}
Hence we have proved the theorem.
\end{proof}
We have proved that on  the interval 
{$[a + \lambda(b-a), \, b]$,} 
the contribution to the objective functional
of this part of the time interval 
decays with 
${\cal O}
{\left( \tfrac{1}{\lambda (b - a)} \right)}$.

\subsubsection{Inductive refinement}
Consider now the following statement
\begin{theorem}
	\label{odethm1}
	Let $\alpha \in (0, \, 1)$ be given. The optimization problem
	$\mathcal Q(N, \, a,\, b , \, \psi^0)$
	has a turnpike property with interior decay in the sense that
	\[
		\frac{1}{N}
		\int\limits_{ {a + ( 1 - \alpha^2) (b-a) }}^{  b}
		\left(\| \hat \psi(a, b,\, \psi^0)(t)  - \psi^{(\sigma)}  \|_N
		+
		\|  \hat u(a, b ,\, \psi^0)(t) -  u^{(\sigma)}\|_N
		\right)^2 \, dt
	\]
	\[
	\leq   \frac{\mathcal C_0^3}{
		\alpha \, (1-\alpha)^2(b - a)^2}
	\frac{1}{N}
	\sum_{k=1}^N \|\psi^0 - \psi^{(\sigma)}\|
	\]
	where $\mathcal C_0$ is as in \eqref{c0definition}.
\end{theorem}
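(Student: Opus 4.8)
The plan is to iterate the argument behind Theorem \ref{odethm} twice, each time with the parameter choice $\lambda = 1-\alpha$. The first pass is nothing but Theorem \ref{odethm}: it produces an intermediate time $t_0$ and a bound on the optimal value of the sub-problem started at $t_0$. The second pass applies the same averaging--cheap-control--dissipativity cycle to that sub-problem, which upgrades the decay rate from $\mathcal O(1/(b-a))$ to $\mathcal O(1/(b-a)^2)$ at the price of pushing the left endpoint of the interval from $a+(1-\alpha)(b-a)$ to $a+(1-\alpha^2)(b-a)$. The device that legitimises the iteration is the dynamic programming principle: the restriction to $[t_0,b]$ of the optimal pair for $\mathcal Q(N,a,b,\psi^0)$ is optimal for the sub-problem started at $\hat\psi(t_0)$, so this sub-problem again enjoys the dissipativity inequality \eqref{dissipationinequalityN} and the cheap control condition \eqref{exactcontrol}. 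Throughout I abbreviate $\hat\psi(a,b,\psi^0)(t)$ and $\hat u(a,b,\psi^0)(t)$ to $\hat\psi(t)$ and $\hat u(t)$, and I write $g(t) = \frac{1}{N}\left(\|\hat\psi(t)-\psi^{(\sigma)}\|_N + \|\hat u(t)-u^{(\sigma)}\|_N\right)^2$ for the integrand.

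First, applying Theorem \ref{odethm} with $\lambda = 1-\alpha$ I obtain, exactly as in the passage from \eqref{aprioriungleichung} and \eqref{exactcontrolplpl}, a time $t_0 \in [a,\,a+(1-\alpha)(b-a)]$ with
\[
\mathcal V(N,t_0,b,\hat\psi(t_0)) \leq \frac{\mathcal C_0^2}{(1-\alpha)(b-a)}\,\frac{1}{N}\sum_{k=1}^N\|\psi^0-\psi^{(\sigma)}\|.
\]
Because $t_0 \leq a+(1-\alpha)(b-a)$, the remaining horizon obeys $b-t_0 \geq \alpha(b-a)$, and this lower bound is what supplies the second factor of $(b-a)$.

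Next I treat $[t_0,b]$ as the new problem. By dissipativity \eqref{dissipationinequalityN} on $[t_0,b]$ one has $\int_{t_0}^b g(t)\,dt \leq \mathcal V(N,t_0,b,\hat\psi(t_0))$, so averaging $g$ over the sub-interval $[t_0,\,t_0+(1-\alpha)(b-t_0)]$ of length $(1-\alpha)(b-t_0)$ produces a time $t_1$ there with $g(t_1) \leq \frac{1}{(1-\alpha)(b-t_0)}\,\mathcal V(N,t_0,b,\hat\psi(t_0))$. Feeding $t_1$ into the cheap control condition and then into dissipativity on $[t_1,b]$ precisely as in Theorem \ref{odethm}, and inserting $b-t_0 \geq \alpha(b-a)$, yields
\[
\frac{1}{N}\int_{t_1}^b\left(\|\hat\psi(t)-\psi^{(\sigma)}\|_N + \|\hat u(t)-u^{(\sigma)}\|_N\right)^2\,dt \leq \frac{\mathcal C_0^3}{\alpha(1-\alpha)^2(b-a)^2}\,\frac{1}{N}\sum_{k=1}^N\|\psi^0-\psi^{(\sigma)}\|.
\]

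It then remains to verify that $[a+(1-\alpha^2)(b-a),\,b] \subseteq [t_1,b]$, after which the claim follows by monotonicity of the integral of the nonnegative integrand. This endpoint check is the one genuinely computational point: from $t_1 \leq (1-\lambda)t_0 + \lambda b$ with $\lambda = 1-\alpha$ and $t_0 \leq a+(1-\alpha)(b-a)$, a short calculation gives $t_1 \leq a+(1-\alpha)(1+\alpha)(b-a) = a+(1-\alpha^2)(b-a)$. I expect the main obstacle to be purely in the bookkeeping, on two fronts: (i) confirming the endpoint estimate for $t_1$ together with the horizon bound $b-t_0 \geq \alpha(b-a)$, so that the two denominators combine to $\alpha(1-\alpha)^2(b-a)^2$; and (ii) carrying the distance term through the cheap control condition in the same way as in the step from \eqref{aprioriungleichung} to \eqref{exactcontrolplpl}, so that each cycle contributes exactly one further power of $\mathcal C_0$. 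Repeating the cycle $n$ times with $\lambda = 1-\alpha$ would give the general inductive refinement, with $\mathcal C_0^{n+1}$ in the numerator and $(b-a)^{-n}$ in the rate.
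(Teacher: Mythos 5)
Your proposal is correct and follows essentially the same route as the paper: both iterate the averaging--cheap-control--dissipativity cycle a second time on the subproblem starting at $t_0$, yielding the extra factors of $\mathcal C_0$ and $1/(b-a)$. The only cosmetic difference is that the paper selects $t_1$ in the fixed window $[a+(1-\alpha)(b-a),\,a+(1-\alpha^2)(b-a)]$ of length $\alpha(1-\alpha)(b-a)$, whereas you average over the $t_0$-dependent window $[t_0,\,t_0+(1-\alpha)(b-t_0)]$ and then verify $t_1\leq a+(1-\alpha^2)(b-a)$ by the endpoint computation, which you carry out correctly and which gives the same constant via $b-t_0\geq\alpha(b-a)$.
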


\begin{proof}
{
We have shown that for $\alpha \in (0, \, 1)$
in  the intervals 
\[
[b - \alpha (b-a) ,\, b]
= [ a + ( 1 - \alpha) (b-a),\, b]
\]
the 
distance between the static and the dynamic solutions
decays with the order ${\cal O}\left( \tfrac{1}{{(1-\alpha )( b-a)} } \right)$. 
}
\\
We assume now that $t_0\in [a, \,a + ( 1 - \alpha) (b-a) ]$
has been chosen as in the previous section such
that we have
\begin{equation}
	\begin{split}
	\label{aprioriungleichunginduktionsanfang}
	\frac{1}{N} &\left(\| \hat \psi(a, b,\, \psi^0)(t_0)  - \psi^{(\sigma)}  \|_N
	+
	\|  \hat u(a, b ,\, \psi^0)(t_0) -  u^{(\sigma)}\|_N
	\right)^2 \\
	&\leq
	\frac{1}{   ( 1 - \alpha) (b-a)  }
	\frac{\mathcal C_0}{N}
	\sum_{k=1}^N \|\psi^0 - \psi^{(\sigma)}\|.
		\end{split}
\end{equation}
Then as in the previous section using  the 
dissipativity inequality 
\eqref{dissipationinequalityN}
and the cheap control condition
\eqref{exactcontrol}
we obtain
\begin{equation*}
	\begin{split}
	\frac{1}{N} &\int\limits_{ t_0 }^{  b}
		\left(\| \hat \psi(a, b,\, \psi^0)(t)  - \psi^{(\sigma)}  \|_N
		+
		\|  \hat u(a, b ,\, \psi^0)(t) -  u^{(\sigma)}\|_N
		\right)^2 \, dt
	\\
& \leq 		\mathcal V(N,\, t_0,\,b ,\, \hat \psi^0)
	\\
&\leq  \frac{\mathcal C_0^2}{  ( 1 - \alpha) (b-a) }
\frac{1}{N}
\sum_{k=1}^N \|\psi^0 - \psi^{(\sigma)}\|.
	\end{split}
\end{equation*}
{
Similarly to \eqref{aprioriungleichung} we find that 
due to \eqref{dissipationinequalityN}
there exists
\[
t_1 \in [a + (1 - \alpha)(b - a), \,
a + (1 - \alpha^2)(b - a)
]
=
[ b - \alpha(b-a),\, b - \alpha^2(b-a)]
\]
}
such that
\begin{equation}
	\label{aprioriungleichunginduktion}
	\begin{split}
	\frac{1}{N} & \left(\| \hat \psi(a, b,\, \psi^0)(t_1)  - \psi^{(\sigma)}  \|_N
	+
	\|  \hat u(a, b ,\, \psi^0)(t_1) -  u^{(\sigma)}\|_N
	\right)^2
	\\
	&\leq \frac{1}{\alpha  (1-\alpha) (b-a)}
		\mathcal V(N,\, t_0,\,b ,\, \hat \psi^0)
	\\
	& \leq \frac{1}{\alpha  (1-\alpha) (b-a)}
 \frac{\mathcal C_0^2}{  ( 1 - \alpha) (b-a) }
\frac{1}{N}
\sum_{k=1}^N \|\psi^0 - \psi^{(\sigma)}\|.
.
		\end{split}
\end{equation}
Inequality \eqref{exactcontrol}
from the cheap control assumption implies  that for the optimization problem
$\mathcal Q(N, \, t_1, \,  b, \, \hat \psi^1)$
that starts
at $t_1$ with the initial state
$\hat \psi^1 = \hat \psi(a, b,\, \psi^0)(t_1)$
we have
\begin{equation}
	\label{exactcontrolplplpl1}
	\mathcal V(N,\, t_1,\,b ,\, \hat \psi^1)
	\leq
	\mathcal C_0 \,
	\frac{1}{N} \sum_{k=1}^N 
	\, 
	\| \hat \psi^1(a, b,\, \psi^0)(t_1) - \psi^{(\sigma)}\|
	.
\end{equation}
With \eqref{aprioriungleichunginduktion} this implies
\begin{equation}
	\mathcal V(N,\, t_1,\,b ,\, \hat \psi^1)
	\leq
\frac{1}{\alpha  (1-\alpha) (b-a)}
 \frac{\mathcal C_0^3}{  ( 1 - \alpha) (b-a) }
	\frac{1}{N}
	\sum_{k=1}^N \|\psi^0 - \psi^{(\sigma)}\|
	.
\end{equation}
Due to the 
dissipativity inequality 
\eqref{dissipationinequalityN}
this yields

\begin{equation*}
	\begin{split}
\frac{1}{N}&
\int\limits_{ a + (1 - \alpha^2)(b - a) }^{  b}
\left(\| \hat \psi(a, b,\, \psi^0)(t)  - \psi^{(\sigma)}  \|_N
+
\|  \hat u(a, b ,\, \psi^0)(t) -  u^{(\sigma)}\|_N
\right)^2 \, dt
\\
&\leq 
\frac{1}{N}
\int\limits_{ t_1 }^{  b}
\left(\| \hat \psi(a, b,\, \psi^0)(t)  - \psi^{(\sigma)}  \|_N
+
\|  \hat u(a, b ,\, \psi^0)(t) -  u^{(\sigma)}\|_N
\right)^2 \, ds
\\
&\leq
\mathcal V(N,\, t_1,\,b ,\, \hat \psi^1)
\\
&\leq  
\frac{1}{\alpha  (1-\alpha)^2 (b-a)^2}
{\mathcal C_0^3}
\frac{1}{N}
\sum_{k=1}^N \|\psi^0 - \psi^{(\sigma)}\|.
	\end{split}
\end{equation*}
This ends the proof.
\end{proof}
{
Hence on  
$[ a + (1 - \alpha^2)(b - a) , b]$
the contribution to the objective functional
of this part of the time interval 
decays with 
${\cal O}\left( \tfrac{1}{\alpha  (1-\alpha)^2 (b-a)^2}
\right)$.
Now we can proceed inductively to obtain a
decay of the order  ${\cal O}\left( 1/(b - a)^n \right)$
for $n\in \{1,2,3,\, ....\}$
with corresponding constants
$\mathcal C_n$ that grow with $n$.
It is also possible to state Theorem \ref{odethm1}
as an upper bound for an integral
where the lower bound of the integration interval
grows more slowly than linear,
namely only with the order $\sqrt{b-a}$.
For $b> 1$ define
\[ \mathcal B_\ast(b) = 
\frac{1}{N}
\int\limits_{ a  + 2 \sqrt{b-a}- 1}^{  b}
\left(\| \hat \psi(a, b,\, \psi^0)(t)  - \psi^{(\sigma)}  \|_N
+
\|  \hat u(a, b ,\, y_0)(t) -  u^{(\sigma)}\|_N
\right)^2 \, dt.
\]
Then we have the following result.
}
\begin{theorem}
	\label{odethm1a}
	For $b> 1$ the optimization problem
	$\mathcal Q(N, \, a,\, b , \, \psi^0)$
	has a turnpike property with interior decay in the sense that
	\[
	\mathcal B_\ast(b) 
	\leq 
	\frac{1}{
	\sqrt{b - a}(\sqrt{b - a} - 1)
	}
		\, { \mathcal C_0^3 } \,
	\frac{1}{N}
	\| \psi^0 - \psi^{(\sigma)}\|_N  
	\]
	where $\mathcal C_0$ is as in \eqref{c0definition}.
\end{theorem}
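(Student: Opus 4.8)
The plan is to obtain this estimate directly from the inductive refinement in Theorem~\ref{odethm1} by a single, $b$-dependent calibration of the free parameter $\alpha$, rather than by iterating the induction further. Theorem~\ref{odethm1} asserts, for every $\alpha\in(0,1)$, that the tail integral over $[a+(1-\alpha^2)(b-a),\,b]$ is bounded by $\frac{\mathcal C_0^3}{\alpha(1-\alpha)^2(b-a)^2}\,\frac1N\sum_{k=1}^N\|\psi^0-\psi^{(\sigma)}\|$. The idea is to choose $\alpha$ so that the lower endpoint $a+(1-\alpha^2)(b-a)$ coincides exactly with $a+2\sqrt{b-a}-1$, the sublinearly growing lower limit appearing in $\mathcal B_\ast(b)$.

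First I would introduce $s=\sqrt{b-a}$ and set $\alpha=\frac{s-1}{s}=1-\frac1{\sqrt{b-a}}$. Since the integration interval in $\mathcal B_\ast(b)$ is non-degenerate and contained in $[a,b]$ precisely when $2\sqrt{b-a}-1<b-a$, i.e. $(s-1)^2>0$, the relevant regime is $s>1$ (equivalently $b-a>1$); in that regime $\alpha\in(0,1)$, so Theorem~\ref{odethm1} applies. Next I would verify the endpoint matching by the short computation $1-\alpha^2=\frac{s^2-(s-1)^2}{s^2}=\frac{2s-1}{s^2}$, which gives $a+(1-\alpha^2)(b-a)=a+(2s-1)=a+2\sqrt{b-a}-1$, so the integral bounded in Theorem~\ref{odethm1} is exactly $\mathcal B_\ast(b)$ for this value of $\alpha$.

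It then remains only to simplify the prefactor. Using $1-\alpha=\frac1s$ and $(b-a)^2=s^4$, I compute $\alpha(1-\alpha)^2(b-a)^2=\frac{s-1}{s}\cdot\frac{1}{s^2}\cdot s^4=s(s-1)=\sqrt{b-a}\,(\sqrt{b-a}-1)$. Inserting this into the bound of Theorem~\ref{odethm1} yields $\mathcal B_\ast(b)\le\frac{\mathcal C_0^3}{\sqrt{b-a}\,(\sqrt{b-a}-1)}\,\frac1N\sum_{k=1}^N\|\psi^0-\psi^{(\sigma)}\|$, which is the claimed inequality, the data term being recorded in the statement with the $\|\cdot\|_N$ notation.

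There is no genuine analytical obstacle: the whole argument is one calibrated substitution into an already-proved estimate. The points that require care are conceptual rather than technical: recognizing that the sublinear $\sqrt{b-a}$ growth of the lower limit is produced exactly by letting $\alpha\to1$ along $\alpha=(s-1)/s$, and reading the hypothesis as $b-a>1$ so that $\alpha\in(0,1)$ and the interval $[a+2\sqrt{b-a}-1,\,b]$ is non-empty. I would also remark that this single choice is what balances the competing $\alpha$- and $(1-\alpha)$-factors in the denominator of Theorem~\ref{odethm1}, which is why the rate sharpens to $1/(\sqrt{b-a}\,(\sqrt{b-a}-1))$.
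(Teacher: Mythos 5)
Your proof is correct and takes essentially the same route as the paper's: both substitute $\alpha = 1 - \tfrac{1}{\sqrt{b-a}}$ into Theorem~\ref{odethm1}, check that $1-\alpha^2 = \tfrac{2\sqrt{b-a}-1}{b-a}$ so that the lower integration limit becomes $a + 2\sqrt{b-a}-1$, and simplify $\alpha(1-\alpha)^2(b-a)^2 = \sqrt{b-a}\,(\sqrt{b-a}-1)$. Your side remark that the hypothesis should really be read as $b-a>1$ (so that $\alpha\in(0,1)$) is a fair clarification of the paper's statement, not a deviation from its argument.
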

\begin{proof}
Set $\alpha =  1 -\frac{1}{\sqrt{b-a}} \in (0, \, 1)$.
Then $(1 -  \alpha)^2 = \frac{1}{b - a}$.
Hence we have
\[\alpha \,  (1 -  \alpha)^2\, (b -a)^2
=
\sqrt{b - a}(\sqrt{b - a} - 1).
\]
Since
$1 - \alpha^2 = 
\frac{ 2 \sqrt{b - a} - 1}{b - a}
$
 the assertion follows from Theorem \ref{odethm1}.
\end{proof}

\begin{remark}
	Similarly as in \cite{zbMATH07364550} we can sharpen this bound inductively.
\end{remark}

\subsection{The turnpike inequality in the mean-field limit}

In this section we state and prove the theorem for the turnpike property in term of measures, in order to do that, we use the mean-field version of the strict dissipativity \eqref{eq:strict_diss_meso} and cheap control \eqref{eq:cheap_control_ass} conditions.

\begin{theorem}	\label{thm:turnpike_meso}
	Let $\lambda \in (0, \,1)$ be given, and the interval $[a,b]$ with $b>0$.
	Consider the quantity
	\[
	\textbf A_\ast(b) =  \int_{a + \lambda (b-a)}^b \int_{\mathbb{R}^d}  \left( \| x  - \psi^{(\sigma)}  \|^2
	+
	\|  \hat u_{(a, b ,\mu^0)}(t,x) -  u^{(\sigma)}\|^2
	\right)d \hat \mu_{(a, b ,\mu^0)}(t,x) \, dt,
	\]
	where we define as $ \hat \mu_{(a, b ,\mu^0)}(t,x)$ and $\hat u_{(a, b ,\mu^0)}(t,x)$
	the density and control respectively at time $t$ with initial condition $\mu(a,x) = \mu^0(x) = \hat  \mu_{(a, b ,\mu^0)}(a,x)$.
	Then the optimization problem
	$\textbf Q(a,\, b , \, \mu^0)$
	has a turnpike property with interior decay in the sense that
	\[
	\textbf A_\ast(b) \leq   \frac{\mathcal C_0^2}{
		\lambda (b-a)}
	\int_{\mathbb{R}^d} \| x- \psi^{(\sigma)} \|\, d \mu(a,x).
	\]
	where $\mathcal C_0$ is as in \eqref{c0definition}.
\end{theorem}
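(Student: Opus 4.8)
The plan is to transcribe the proof of the microscopic Theorem~\ref{odethm} into the language of measures, replacing each empirical average $\tfrac1N\sum_k(\cdot)$ by an integral $\int_{\mathbb R^d}(\cdot)\,d\mu$ and invoking the mean-field dissipativity \eqref{eq:strict_diss_meso} and cheap-control \eqref{eq:cheap_control_ass} inequalities in place of \eqref{dissipationinequalityN} and \eqref{exactcontrol}. First I would record the mean-field \emph{integral} turnpike estimate. Writing $(\hat\mu,\hat u)=(\hat\mu_{(a,b,\mu^0)},\hat u_{(a,b,\mu^0)})$ for the optimal pair, applying \eqref{eq:strict_diss_meso} with $\tau=b$ together with the identity $\textbf V(a,b,\mu^0)=\int_a^b f(\hat\mu,\hat u)\,dt$, and then \eqref{eq:cheap_control_ass}, yields
\[
\int_a^b\int_{\mathbb R^d}\bigl(\|x-\psi^{(\sigma)}\|^2+\|\hat u(t,x)-u^{(\sigma)}\|^2\bigr)\,d\hat\mu(t,x)\,dt\le\textbf V(a,b,\mu^0)\le\mathcal C_0\int_{\mathbb R^d}\|x-\psi^{(\sigma)}\|\,d\mu^0(x).
\]

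Next I would localize in time. Since the left-hand integrand is a nonnegative integrable function of $t$, its integral over the left sub-window $[a,\,a+\lambda(b-a)]$ is no larger than the same right-hand side; dividing by the window length $\lambda(b-a)$ produces, by a mean-value/pigeonhole argument, a time $t_0\in[a,\,a+\lambda(b-a)]$ with
\[
\int_{\mathbb R^d}\bigl(\|x-\psi^{(\sigma)}\|^2+\|\hat u(t_0,x)-u^{(\sigma)}\|^2\bigr)\,d\hat\mu(t_0,x)\le\frac{\mathcal C_0}{\lambda(b-a)}\int_{\mathbb R^d}\|x-\psi^{(\sigma)}\|\,d\mu^0(x).
\]
The Wasserstein-continuity of $t\mapsto\hat\mu(t)$ from Theorem~\ref{thm:fornasier} makes the state part of this integrand continuous in $t$, which is convenient but not essential.

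The third step is the restart at $t_0$. By the principle of optimality the restriction $\hat\mu|_{[t_0,b]}$ is optimal for $\textbf Q(t_0,b,\hat\mu(t_0))$ — this is where the autonomy of the dynamics \eqref{eq:meso_dyn} and of the running cost enters — so $\int_{t_0}^b f(\hat\mu,\hat u)\,dt=\textbf V(t_0,b,\hat\mu(t_0))$, and $\hat\mu(t_0)\in P_1(\mathbb R^d)$ is an admissible compactly supported datum, being the image of $\mu^0$ under the characteristic flow on $[a,t_0]$. Applying \eqref{eq:strict_diss_meso} on $[t_0,b]$, using $[a+\lambda(b-a),b]\subseteq[t_0,b]$ together with nonnegativity of the integrand, and finally \eqref{eq:cheap_control_ass} at $t_0$ combined with the estimate above (exactly as the microscopic proof passes from \eqref{aprioriungleichung} to the bound on $\mathcal V(N,t_0,b,\hat\psi^0)$), gives
\[
\textbf A_\ast(b)\le\int_{t_0}^b\int_{\mathbb R^d}\bigl(\|x-\psi^{(\sigma)}\|^2+\|\hat u(t,x)-u^{(\sigma)}\|^2\bigr)\,d\hat\mu(t,x)\,dt\le\textbf V(t_0,b,\hat\mu(t_0))\le\frac{\mathcal C_0^2}{\lambda(b-a)}\int_{\mathbb R^d}\|x-\psi^{(\sigma)}\|\,d\mu^0(x),
\]
which is the claimed bound.

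The step I expect to demand the most care is the passage, at the time $t_0$, from the \emph{quadratic} moment $\int_{\mathbb R^d}\|x-\psi^{(\sigma)}\|^2\,d\hat\mu(t_0,x)$ that the integral turnpike controls to the \emph{linear} moment $\int_{\mathbb R^d}\|x-\psi^{(\sigma)}\|\,d\hat\mu(t_0,x)$ that \eqref{eq:cheap_control_ass} consumes. On the probability measure $\hat\mu(t_0)$ these are linked by Jensen's inequality, $\int_{\mathbb R^d}\|x-\psi^{(\sigma)}\|\,d\hat\mu(t_0,x)\le\bigl(\int_{\mathbb R^d}\|x-\psi^{(\sigma)}\|^2\,d\hat\mu(t_0,x)\bigr)^{1/2}$, mirroring the $N$-particle bookkeeping; keeping this reconciliation consistent with the microscopic argument is precisely what pins down the stated constant $\mathcal C_0^2$ and decay order $1/(\lambda(b-a))$. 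The remaining ingredients — measurability of $t\mapsto\hat\mu(t)$, finiteness of the moments (guaranteed by the compact-support conclusion of Theorem~\ref{thm:fornasier}), and availability of the principle of optimality for $\textbf Q$ — are routine.
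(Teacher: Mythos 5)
Your proposal follows the paper's proof step for step: the integral turnpike estimate obtained from \eqref{eq:strict_diss_meso} and \eqref{eq:cheap_control_ass}, the pigeonhole selection of $t_0\in[a,\,a+\lambda(b-a)]$, the restart at $t_0$ via the cheap control condition (with the principle of optimality, which you make explicit and the paper leaves implicit), and dissipativity on $[t_0,b]$ to conclude. Structurally it is the same argument as the paper's.

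The genuine problem is the step you yourself singled out, which your Jensen argument does not actually close. The pigeonhole bounds the \emph{quadratic} quantity $\int_{\mathbb{R}^d}\bigl(\|x-\psi^{(\sigma)}\|^2+\|\hat u(t_0,x)-u^{(\sigma)}\|^2\bigr)\,d\hat\mu(t_0,x)$ by $\frac{\mathcal C_0}{\lambda(b-a)}M$, where $M=\int_{\mathbb{R}^d}\|x-\psi^{(\sigma)}\|\,d\mu^0(x)$, whereas \eqref{eq:cheap_control_ass} applied at time $t_0$ consumes the \emph{first} moment $\int_{\mathbb{R}^d}\|x-\psi^{(\sigma)}\|\,d\hat\mu(t_0,x)$. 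Jensen's inequality goes the wrong way for your purposes: it gives first moment $\le$ (second moment)$^{1/2}$, so what actually follows from your intermediate estimates is
\[
\textbf V(t_0,\,b,\,\hat\mu(t_0))
\;\le\;
\mathcal C_0\left(\frac{\mathcal C_0\, M}{\lambda(b-a)}\right)^{1/2}
\;=\;
\frac{\mathcal C_0^{3/2}}{\sqrt{\lambda(b-a)}}\, M^{1/2},
\]
a decay of order $(b-a)^{-1/2}$ in $M^{1/2}$, not the claimed $\frac{\mathcal C_0^2}{\lambda(b-a)}M$. In the regime of interest, where $b-a$ is large so that $\frac{\mathcal C_0 M}{\lambda(b-a)}\le 1$, this Jensen bound is strictly weaker than the one you assert, so your final chain of inequalities does not follow, and your remark that Jensen ``pins down the stated constant $\mathcal C_0^2$ and decay order $1/(\lambda(b-a))$'' is incorrect. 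To be fair, the paper's own proof makes exactly the same silent substitution — it inserts the bound \eqref{eq:aprioriungleichung_meso} on the quadratic quantity into the cheap control inequality as if it bounded the first moment (and the microscopic Theorem \ref{odethm} does likewise) — so you have faithfully reproduced the paper's argument together with this defect. The chain would close exactly as stated if the cheap control condition were available with the second moment $\int_{\mathbb{R}^d}\|x-\psi^{(\sigma)}\|^2\,d\mu^0(x)$ on the right-hand side, but that is not the form established in Section \ref{sec:cheap}.
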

\begin{proof}
	From the mean-field strict dissipativity \eqref{eq:strict_diss_meso} and cheap control \eqref{eq:cheap_control_ass} conditions, we know that the optimal density and control satisfy
		\begin{equation}
		\label{eq:integraltp_meso}
		\begin{split}
	\int_a^b \int_{\mathbb{R}^d} &
\left( \|  x - \psi^{(\sigma)}\|^2
+
\|  u(t,x)  - u^{(\sigma)} \|^2
\right) d \mu(t,x)
\, dt
\\
&\leq 
{ \mathcal C_0}	\int_{\mathbb{R}^d} \| x- \psi^{(\sigma)} \|\, d \mu(a,x).
		\end{split}
\end{equation}
	Furthermore we can write
	\[ 
	\begin{split}
	\int_a^b \int_{\mathbb{R}^d} &
		\left( \|  x - \psi^{(\sigma)}\|^2
		+
		\|  \hat u_{(a, b ,\mu^0)}(t,x)  - u^{(\sigma)} \|^2
		\right) d \hat \mu_{(a, b ,\mu^0)}(t,x) \, dt
		\\
		&\leq 
		{ \mathcal C_0}	\int_{\mathbb{R}^d} \| x- \psi^{(\sigma)} \|\, d  \mu(a,x).
	\end{split}
		\]
	Hence there exists $t_0 \in [a,\, a + \lambda (b- a) ]$ such that
		\begin{equation}
		\label{eq:aprioriungleichung_meso}
	\begin{split}
	\int_{\mathbb{R}^d} &
		\left( \|  x - \psi^{(\sigma)}\|^2
		+
		\|  \hat u_{(a, b ,\mu^0)}(t_0,x)  - u^{(\sigma)} \|^2
		\right) d \hat \mu_{(a, b ,\mu^0)}(t_0,x) \, dt
		\\
		&\leq 
		\frac{ \mathcal C_0}{\lambda( b -a) }	\int_{\mathbb{R}^d} \| x- \psi^{(\sigma)} \|\, d  \mu(a,x).
	\end{split}
		\end{equation}
	Thanks to the cheap control assumption \eqref{eq:cheap_control_ass}, the optimization problem
	$\textbf Q(t_0, \,  b, \, \hat \mu^0)$
	that starts
	at $t_0$ with the initial density
	$\hat \mu_{(a, b,\, \mu^0)}(t_0,x)$,
	satisfies
	\begin{equation*}
		\textbf V(t_0,\,b ,\, \hat \mu^0)
		\leq
		{ \mathcal C_0}\int_{\mathbb{R}^d} \| x- \psi^{(\sigma)} \|\, d  \hat \mu_{(a, b,\, \mu^0)}(t_0,x)
		.
	\end{equation*}
	Together with \eqref{eq:aprioriungleichung_meso}, we obtain
	\begin{equation*}
		\textbf V(t_0,\,b ,\, \hat \mu^0)
		\leq
		\frac{  \mathcal C_0^2}{\lambda (b -a) }	\int_{\mathbb{R}^d} \| x- \psi^{(\sigma)} \|\, d  \mu(a,x)
		.
	\end{equation*}
	Due to the dissipativity inequality \eqref{eq:strict_diss_meso}
	this yields
	\begin{align*}
		\textbf A_\ast(b) &\leq 
		\int_{t_0}^b \int_{\mathbb{R}^d}
		\left( \|  x - \psi^{(\sigma)}\|^2
		+
		\|  \hat u_{(a, b ,\mu^0)}(t,x)  - u^{(\sigma)} \|^2
		\right) d \hat \mu_{(a, b ,\mu^0)}(t,x) \, dt
		\\
		&\leq
		\textbf V( t_0,\,b ,\, \hat \mu^0)
		\\
		&\leq  \frac{\mathcal C_0^2}{
			\lambda (b-a)}	\int_{\mathbb{R}^d} \| x- \psi^{(\sigma)} \|\, d  \mu(a,x),
	\end{align*}
	that is the inequality  stated in the theorem.
\end{proof}
\begin{remark}
	The bound we have in Theorem \ref{thm:turnpike_meso}, on the distance between the optimal dynamic and
	the optimal static solution, can be inductively sharpened, using the same procedure of the microscopic case.
\end{remark}


\section{Conclusion}
	We have obtained a turnpike theorem for
	microscopic and mesoscopic optimal control problems that 
	satisfy a strict dissipativity inequality and we used these properties to show that the optimal control problems fulfill the cheap control result and in turn a turnpike property.
	Providing suitable assumptions to guarantee the existence of solutions in the mean-field limit, 
	we have proven the turnpike property  both on the level of finitely many interacting particles and the mean-field limit. The turnpike property holds true without additional assumptions. 
	Possible future work includes the numerical simulation and the extension e.g. 
	to the  
	case
	that the microscopic model is governed by a second-order dynamics.


\subsection*{Acknowledgments}
The authors thank the Deutsche Forschungsgemeinschaft (DFG, German Research Foundation) for the financial support through 320021702/GRK2326,  333849990/IRTG-2379, B04, B05 and B06 of 442047500/SFB1481, HE5386/18-1,19-2,22-1,23-1,25-1, ERS SFDdM035 and under Germany’s Excellence Strategy EXC-2023 Internet of Production 390621612 and under the Excellence Strategy of the Federal Government and the Länder. Support through the EU DATAHYKING is also acknowledged.  This work was also funded by the DFG, TRR 154, \emph{Mathematical Modelling, Simulation and Optimization Using the Example of Gas Networks}, project C03 and C05, Projektnr. 239904186.

\bibliographystyle{siam}

\bibliography{meanfield,biblioCS2}

Competing interests: The authors declare none.

\end{document}